\title{\LARGE \bf
Dynamic Watermarking for General LTI Systems}
\author{Pedro Hespanhol, Matthew Porter, Ram Vasudevan, and Anil Aswani
\thanks{This work was supported by the UC Berkeley Center for Long-Term Cybersecurity, and by Ford Motor Company.}
\thanks{Pedro Hespanhol and Anil Aswani are with the Department of Industrial Engineering and Operations Research, University of California, Berkeley, CA 94720, USA 
        {\tt\small pedrohespanhol@berkeley.edu, aaswani@berkeley.edu}}%
				\thanks{Matthew Porter and Ram Vasudevan are with the Department of Mechanical Engineering, University of Michigan, Ann Arbor, MI 48109, USA 
        {\tt\small matthepo@umich.edu, ramv@umich.edu}}%
}
\def\doubleunderline#1{\underline{\underline{#1}}}
\DeclareMathOperator{\alim}{as-lim}
\DeclareMathOperator{\rank}{rank}
\DeclareMathOperator{\diag}{diag}
\DeclareMathOperator{\trace}{trace}
\newtheorem{lemma}{Lemma}
\newtheorem{theorem}{Theorem}
\newtheorem{proposition}{Proposition}
\newtheorem{corollary}{Corollary}
\begin{document}

\maketitle
\thispagestyle{empty}
\pagestyle{empty}

\begin{abstract}
Detecting attacks in control systems is an important aspect of designing secure and resilient control systems.  Recently, a dynamic watermarking approach was proposed for detecting malicious sensor attacks for SISO LTI systems with partial state observations and MIMO LTI systems with a full rank input matrix and full state observations; however, these previous approaches cannot be applied to general LTI systems that are MIMO and have partial state observations. This paper designs a dynamic watermarking approach for detecting malicious sensor attacks for general LTI systems, and we provide a new set of asymptotic and statistical tests.  We prove these tests can detect attacks that follow a specified attack model (more general than replay attacks), and we also show that these tests simplify to existing tests when the system is SISO or has full rank input matrix and full state observations.  The benefit of our approach is demonstrated with a simulation analysis of detecting sensor attacks in autonomous vehicles.  Our approach can distinguish between sensor attacks and wind disturbance (through an internal model principle framework), whereas improperly designed tests cannot distinguish between sensor attacks and wind disturbance.
\end{abstract}

\section{INTRODUCTION}

Secure and resilient control requires the development of mechanisms to allow safe operation in the face of malicious attacks or external interferences. This is particularly challenging for cyber-physical systems (CPS) that feature interconnection between physical sensors and actuators with the communication and computation capabilities of routers, servers, etc.  Such concerns are motivated by real-world instances of attacks on CPS, including: the Maroochy-Shire incident \cite{abrams2008malicious}, the Stuxnet worm \cite{langner2011stuxnet}, and other incidents \cite{cardenas2008research}.

For control systems, two possible modes of attacks are either an attacker inserting faulty measurements into the output sensor signal or an attacker inserting malicious values into the actuator input for the control system. Cybersecurity techniques \cite{parno2006secure,kumar2006managing,wang2013cyber,kim2012cyber} are an important component of designing resilient CPS.  However, CPS frequently has a decentralized structure; and so approaches that detect attacks by decoupling different sensing and actuating components of the system are particularly useful for ensuring safe operation.

This paper designs a dynamic watermarking approach for detecting malicious sensor attacks for general LTI systems, and has two main contributions: First, we generalize the watermarking approach developed in \cite{satchidanandan2016dynamic} for SISO LTI systems with partial state observations and MIMO LTI systems with a full rank input matrix and full state observations under an arbitrary attack, and our generalization applies to general LTI systems under a specific attack model that is more general than replay attacks \cite{weerakkody2014detecting}. Second, we show that modeling is important for designing watermarking techniques: For instance, dynamic watermarking was used to detect sensor attacks in an intelligent transportation system \cite{ko2016theory}; however, here we show that persistent disturbances such as those from wind can invalidate watermarking approaches, and we propose an approach based on the internal model principle to compensate for persistent disturbances.  This second contribution motivates our generalization of dynamic watermarking to general MIMO LTI systems with partial observations, since internal model states are never directly observed.

\subsection{Watermarking for CPS}

Defense and security for CPS is classified into either detection and identification \cite{cardenas2008research,cardenas2008secure,pasqualetti2013attack}, and a number of ``passsive'' techniques have been proposed.  State estimation algorithms \cite{fawzi2014secure,fawzi2011secure} have been suggested in order to handle attacks on the physical plants within a CPS. Another related approach \cite{bai2015security} provides a metric to characterize the resilience of a system facing stealthy attacks on the actuators.


More recently, ``active defense'' based on watermarking has been developed for detecting sensor attacks \cite{satchidanandan2016dynamic,ko2016theory,mo2009secure,mo2010false,mo2014detecting,weerakkody2014detecting,mo2015physical}.  The idea is that honest (i.e., not compromised by an attacker) actuators superimpose a random signal onto the control input to ensure security in face of sensor attacks.  One set of approaches \cite{mo2009secure,mo2010false,mo2014detecting,weerakkody2014detecting,mo2015physical} develops statistical hypothesis tests that detect attacks with a certain error rate, while dynamic watermarking approaches \cite{satchidanandan2016dynamic,ko2016theory} develop a test to ensure that only attacks which add a zero-average-power signal to the sensor measurements can remain undetected.  The first set of techniques applies to general LTI systems under specific attack models, but cannot ensure the zero-average-power property for attacks; while the second set of techniques applies to specific LTI systems under general attack models.  Our first contribution in this paper is to partially bridge the gap between these two techniques by developing a method that applies to general LTI systems under specific attack models and that ensures the zero-average-power property for attacks.

\subsection{Security for Intelligent Transportation Systems}

The design of intelligent transportation systems (ITS) is receiving increased attention \cite{ko2016theory,gonzalez2010perpetual,aswani2011,zhang2012hierarchical,vasudevan2012safe,mohan2016convex,como2016convexity}, and one significant area for further study is the design of methods to ensure the safe and resilient operation of ITS.  One recent work \cite{ko2016theory} considered the use of dynamic watermarking to detect sensor attacks in a network of autonomous vehicles coordinated by a supervisory controller; the watermarking approach was successfully able to detect attacks.  However, large-scale deployments of ITS must be resilient in the face of persistent disturbances from environmental and human factors.  Wind is an example of such a persistent disturbance.  A second contribution of this work is from the perspective of modeling: We show that persistent disturbances such as those from wind can invalidate watermarking approaches, and we propose an internal model principle-based approach to handle persistent disturbances.  This motivates our generalization of dynamic watermarking to general MIMO LTI systems with partial observations, since internal model states are not directly observed.

\subsection{Outline}
Section \ref{sec:ltisam} reviews the general LTI system model (i.e., MIMO systems with partial observations) and specifies our attack model, and Sect. \ref{sec:idnt} provides intuition on why existing dynamic watermarking approaches cannot by used on a general LTI system.   We construct a detection consistent dynamic watermarking approach for general LTI systems under our attack model in Sect. \ref{sec:dct}, and our term \emph{detection consistent} test is used to refer to a test that ensures the zero-average-power property (described above) for attacks.  Next, Sect. \ref{sec:svt} describes how our asymptotic tests can be converted into statistical tests, and Sect. \ref{sec:ret} shows how our tests are special cases of those in \cite{satchidanandan2016dynamic} for the SISO case or the MIMO case with full rank input matrix and full state observations.  We conclude with Sect. \ref{sec:sav}, which conducts simulations of an autonomous vehicle: Our tests are able to distinguish between sensor attacks and wind disturbances when including wind disturbance in the system dynamics using the internal model principal, while improperly designed tests cannot distinguish between attacks and wind.

\section{LTI System and Attack Model}

\label{sec:ltisam}

Let $[r] = \{1,\ldots,r\}$, and consider a MIMO LTI system $x_{n+1} = Ax_n + Bu_n + w_n$ with partial observations $y_n = Cx_n + z_n + v_n$, where $x \in\mathbb{R}^p$, $u\in\mathbb{R}^q$, and $y,z,v\in\mathbb{R}^m$.  The $v_n$ should be interpreted as an additive measurement disturbance added by an attacker, while $w_n$ represents zero mean i.i.d. process noise with a jointly Gaussian distribution and covariance $\Sigma_W$, and $z_n$ represents zero mean i.i.d. measurement noise with a jointly Gaussian distribution and covariance $\Sigma_Z$.  We further assume the process noise is independent of the measurement noise, that is $w_n$ for $n\geq0$ is independent of $z_n$ for $n\geq0$.

If $(A,B)$ is stabilizable and $(A,C)$ is detectable, then a stabilizing output-feedback controller can be designed when $v_n\equiv 0$ using an observer and the separation principle.  Let $K$ be a constant state-feedback gain matrix such that $A+BK$ is Schur stable, and let $L$ be a constant observer gain matrix such that $A+LC$ is Schur stable.  The idea of dynamic watermarking in this context will be to superimpose a private (and random) excitation signal $e_n$ known in value to the controller but unknown in value to the attacker.  As a result, we will apply the control input $u_n = K\hat{x}_n + e_n$, where $\hat{x}_n$ is the observer-estimated state and $e_n$ are i.i.d. Gaussian with zero mean and constant variance $\Sigma_E$ fixed by the controller.

Let $\tilde{x}^\mathsf{T} = \begin{bmatrix} x^\mathsf{T} & \hat{x}^\mathsf{T}\end{bmatrix}$, and define $\underline{B}^\mathsf{T} = \begin{bmatrix} B^\mathsf{T} & B^\mathsf{T}\end{bmatrix}$, $\underline{C} = \begin{bmatrix} C & 0\end{bmatrix}$, $\underline{D}^\mathsf{T} = \begin{bmatrix} \mathbb{I} & 0\end{bmatrix}$, $\underline{L}^\mathsf{T} = \begin{bmatrix} 0 & -L^\mathsf{T}\end{bmatrix}$, and 
\begin{equation}
\underline{A} = \begin{bmatrix} A & BK \\ -LC & A+BK+LC\end{bmatrix}
\end{equation}
Then the closed-loop system with private excitation is given by $\tilde{x}_{n+1} = \underline{A}\tilde{x}_n + \underline{B}e_n + \underline{D}w_n + \underline{L}(z_n+v_n)$.  If we define the observation error $\delta = \hat{x}-x$, then with the change of variables $\check{x}^\textsf{T} = \begin{bmatrix} x^\textsf{T} & \delta^\textsf{T}\end{bmatrix}$ we have the dynamics $\check{x}_{n+1} = \doubleunderline{A}\check{x}_n + \doubleunderline{B}e_n + \doubleunderline{D}w_n + \doubleunderline{L}(z_n+v_n)$, where $\doubleunderline{B}^\textsf{T} = \begin{bmatrix}B^\textsf{T} & 0\end{bmatrix}$, $\doubleunderline{D}^\textsf{T} = \begin{bmatrix} \mathbb{I} & -\mathbb{I}\end{bmatrix}$, $\doubleunderline{L} = \underline{L}$, and 
\begin{equation}
\doubleunderline{A} = \begin{bmatrix} A+BK & BK \\ 0 & A+LC\end{bmatrix}.
\end{equation}
Recall that $\doubleunderline{A}$ is Schur stable whenever $A+BK$ and $A+LC$ are both Schur stable.

Since the controller is fixed, we can suppose the attacker chooses $v_n = \alpha(Cx_n + z_n) + C\xi_n + \zeta_n$ for some fixed $\alpha \in \mathbb{R}$, where $\xi_{n+1} = (A+BK)\xi_n + \omega_n$, $\zeta_n$ are i.i.d. Gaussian with zero mean and constant variance $\Sigma_S$ fixed by the attacker, and $\omega_n$ are i.i.d. Gaussian with zero mean and constant variance $\Sigma_O$ fixed by the attacker.  The idea underlying this attack model is that the attacker allows some fraction of the true output $Cx_n + z_n$ to be measured by the controller, and at the same time also incorporates the measurement of a false state $\xi_n$ that evolves according the dynamics that would be expected under the controller.

\section{Intuition for Designing a New Test}

\label{sec:idnt}

To better understand how to design a new test, it is instructive to apply existing dynamic watermarking schemes and the associated tests \cite{satchidanandan2016dynamic} to particular LTI systems.  Such an exercise provides intuition that we use to design new tests.  Our main example is an LTI system with
\begin{equation}
\begin{aligned}
A = \begin{bmatrix}1 & 1 \\ 0 & 1\end{bmatrix},\ B=\begin{bmatrix}0\\1\end{bmatrix},\text{ and } C=\begin{bmatrix}1 & 0\end{bmatrix}.
\end{aligned}
\end{equation}
Suppose the attacker chooses $v_n = -(Cx_n + z_n) + C\xi_n + \zeta_n$ with $\Sigma_S = \Sigma_Z$ and $\Sigma_O=\Sigma_W$, meaning the output measurement $y_n = C\xi_n + \zeta_n$ has no component from the actual system.  This is a SISO (i.e., $m=q=1$) system with partial state measurement, and the tests in \cite{satchidanandan2016dynamic} pass for this example, even though the sensor has been compromised by an attacker.  The problem in this example is that the test
\begin{equation}
\textstyle\alim_{N} \frac{1}{N}\sum_{n=0}^{N-1} L(C\hat{x}_n^{\vphantom{\textsf{T}}} - y_n^{\vphantom{\textsf{T}}}) e_{n-1}^\textsf{T} = 0
\end{equation}
from \cite{satchidanandan2016dynamic} correlates the innovations process $L(C\hat{x}_n- y_n)$ with the private excitation only one step back in time $e_{n-1}$; however, it takes two time steps for the control input to enter into the output in this example.  And so when designing a new test for general LTI systems, we need to take into consideration that there is generally some delay between when some private excitation is applied to when it is observed.

\section{Detection Consistent Test}

\label{sec:dct}

Now let $\Sigma_X$ be the positive semidefinite matrix that solves the following
\begin{equation}
\Sigma_X = \doubleunderline{A}\Sigma_X\doubleunderline{A}^\textsf{T} + \doubleunderline{B}\Sigma_E\doubleunderline{B}^\textsf{T} + \doubleunderline{D}\Sigma_W\doubleunderline{D}^\textsf{T} + \doubleunderline{L}\Sigma_Z\doubleunderline{L}^\textsf{T}.
\end{equation}
Note that $\Sigma_X = \alim_N \frac{1}{N}\sum_{n=0}^{N-1} \check{x}_n^{\vphantom{\textsf{T}}}\check{x}_n^\textsf{T}$.  Similarly let $\Sigma_\Delta$ be the positive semidefinite matrix that solves the following
\begin{equation}
\Sigma_\Delta = (A+LC)\Sigma_\Delta(A+LC)^\textsf{T} + \Sigma_W + L\Sigma_ZL^\textsf{T}.
\end{equation}
Note $\Sigma_\Delta = \alim_N \frac{1}{N}\sum_{n=0}^{N-1} \delta_n^{\vphantom{\textsf{T}}}\delta_n^\textsf{T}$ and $\Sigma_\Delta = \doubleunderline{M}\Sigma_X\doubleunderline{M}^\textsf{T}$, where $\doubleunderline{M} = \begin{bmatrix} 0 & \mathbb{I}\end{bmatrix}$.  Recall that $\Sigma_X$ and $\Sigma_\Delta$ exist because the above are Lyapunov equations with matrices $\doubleunderline{A}, (A+LC)$ that are Schur stable.

\begin{lemma}
\label{lemma:ar}
We have that
\begin{equation}
\underline{A}^r\underline{B} = \begin{bmatrix}(A+BK)^rB\\(A+BK)^rB\end{bmatrix}
\end{equation}
for all $r\geq 0$
\end{lemma}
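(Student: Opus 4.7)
The plan is to prove this by induction on $r$, exploiting the key structural observation that the block matrix $\underline{A}$ acts especially nicely on vectors whose top and bottom blocks coincide.

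For the base case $r = 0$, we have $\underline{A}^0 \underline{B} = \underline{B} = \begin{bmatrix} B \\ B \end{bmatrix}$, which matches the claimed formula since $(A+BK)^0 = \mathbb{I}$. This is immediate from the definition of $\underline{B}$.

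For the inductive step, suppose the formula holds for some $r \geq 0$. Then I would compute
\begin{equation}
\underline{A}^{r+1}\underline{B} = \underline{A}\bigl(\underline{A}^r\underline{B}\bigr) = \begin{bmatrix} A & BK \\ -LC & A+BK+LC\end{bmatrix}\begin{bmatrix}(A+BK)^rB\\(A+BK)^rB\end{bmatrix}.
\end{equation}
The top block simplifies as $A(A+BK)^rB + BK(A+BK)^rB = (A+BK)(A+BK)^rB = (A+BK)^{r+1}B$. For the bottom block, the $-LC$ and $+LC$ terms cancel: $-LC(A+BK)^rB + (A+BK+LC)(A+BK)^rB = (A+BK)^{r+1}B$. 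Both blocks agree, giving the claimed form at index $r+1$.

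Conceptually, the key point (which one could alternatively phrase as an invariant) is that the subspace $\{[v^\mathsf{T}, v^\mathsf{T}]^\mathsf{T} : v \in \mathbb{R}^p\}$ is invariant under $\underline{A}$, and on this subspace $\underline{A}$ acts as $A+BK$ in the sense that $\underline{A}[v^\mathsf{T},v^\mathsf{T}]^\mathsf{T} = [((A+BK)v)^\mathsf{T}, ((A+BK)v)^\mathsf{T}]^\mathsf{T}$. Since $\underline{B}$ lies in this subspace, iterating $\underline{A}$ simply corresponds to iterating $A+BK$ on the common block $B$. I do not anticipate any real obstacle in this proof; the cancellation of the $LC$ terms is the only nontrivial ingredient, and it is essentially built into the construction of $\underline{A}$ from the observer and controller gains.
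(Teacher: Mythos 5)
Your proof is correct and follows essentially the same induction as the paper's, which also verifies the base case $r=0$ and then applies $\underline{A}$ to the block vector with equal top and bottom blocks; you merely spell out the block multiplication and the $LC$ cancellation that the paper leaves as ``calculation of the matrix multiplication.'' The invariant-subspace remark is a nice conceptual gloss but does not change the argument.
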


\begin{proof}
The result holds for $r = 0$ since $\underline{A}^0 = \mathbb{I}$ and $(A+BK)^0 = \mathbb{I}$.  Now suppose the result holds for $r$: We prove that it holds for $r + 1$.  In particular, note that
\begin{equation}
\underline{A}^{r+1}\underline{B} = \underline{A}\begin{bmatrix}(A+BK)^rB\\(A+BK)^rB\end{bmatrix} = \begin{bmatrix}(A+BK)^{r+1}B\\(A+BK)^{r+1}B\end{bmatrix},
\end{equation}
where the first equality holds by the inductive hypothesis, and the second equality follows by calculation of the matrix multiplication.  Hence the result follows by induction.
\end{proof}

\begin{proposition}
\label{prop:aalph}
Let $\underline{A}(\alpha) = \underline{A}+\alpha\underline{H}$ with
\begin{equation}
\label{eqn:math}
\underline{H} = \begin{bmatrix} 0 & 0 \\ -LC & 0\end{bmatrix},
\end{equation}
and define $k' = \min\{k \geq 0\ |\ C(A+BK)^kB \neq 0\}$.  Then we have that $\underline{A}(\alpha)^k\underline{B} = \underline{A}^k\underline{B}$ for $0 \leq k \leq k'$.
\end{proposition}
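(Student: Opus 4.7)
The plan is to prove this by induction on $k$, leveraging Lemma \ref{lemma:ar} to get a clean formula for $\underline{A}^k\underline{B}$ and then showing that the perturbation term $\alpha\underline{H}$ contributes nothing at each step until the index reaches $k'$.

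\textbf{Base case.} For $k=0$, both $\underline{A}(\alpha)^0\underline{B}$ and $\underline{A}^0\underline{B}$ equal $\underline{B}$ since the zeroth power is the identity.

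\textbf{Inductive step.} Suppose $\underline{A}(\alpha)^k\underline{B}=\underline{A}^k\underline{B}$ for some $k$ with $0\leq k < k'$. Using the definition of $\underline{A}(\alpha)$, I would write
\begin{equation}
\underline{A}(\alpha)^{k+1}\underline{B} = \underline{A}(\alpha)\cdot\underline{A}(\alpha)^k\underline{B} = (\underline{A}+\alpha\underline{H})\underline{A}^k\underline{B} = \underline{A}^{k+1}\underline{B} + \alpha\underline{H}\,\underline{A}^k\underline{B}.
\end{equation}
It thus suffices to show the extra term $\underline{H}\,\underline{A}^k\underline{B}$ vanishes. Applying Lemma \ref{lemma:ar}, we have $\underline{A}^k\underline{B} = \begin{bmatrix}(A+BK)^kB\\(A+BK)^kB\end{bmatrix}$, so a direct multiplication using the block structure of $\underline{H}$ gives
\begin{equation}
\underline{H}\,\underline{A}^k\underline{B} = \begin{bmatrix} 0 & 0 \\ -LC & 0\end{bmatrix}\begin{bmatrix}(A+BK)^kB\\(A+BK)^kB\end{bmatrix} = \begin{bmatrix} 0 \\ -LC(A+BK)^kB\end{bmatrix}.
\end{equation}
Since $k<k'$, the definition of $k'$ as the minimum index for which $C(A+BK)^kB\neq 0$ forces $C(A+BK)^kB = 0$, so this term is zero and the induction closes, valid up through $k+1 \leq k'$.

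\textbf{Main obstacle.} The argument itself is essentially bookkeeping, and the one subtlety to watch is the indexing: the hypothesis only guarantees $C(A+BK)^kB=0$ for $k<k'$ (the value at $k=k'$ may be nonzero), so the induction can be pushed from $k$ to $k+1$ only when $k<k'$, which is exactly what yields the claim up to and including $k+1=k'$. Care must also be taken that Lemma \ref{lemma:ar} is applied to $\underline{A}^k\underline{B}$ (the unperturbed power) rather than to $\underline{A}(\alpha)^k\underline{B}$; the inductive hypothesis is what allows the substitution.
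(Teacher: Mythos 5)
Your proof is correct and follows essentially the same route as the paper's: induction on $k$, using Lemma \ref{lemma:ar} to compute $\underline{A}^k\underline{B}$ and the block structure of $\underline{H}$ to show the perturbation term $\alpha\underline{H}\,\underline{A}^k\underline{B}$ vanishes because $C(A+BK)^kB=0$ for $k<k'$. Your added care about the indexing (the induction only pushes from $k$ to $k+1$ while $k<k'$) is exactly the point the paper relies on as well.
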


\begin{proof}
If $k' = 0$, then the result holds trivially.  So assume $k' \geq 1$.  We have that $\underline{A}(\alpha)^0\underline{B} = \underline{A}^0\underline{B}=\underline{B}$ since $\underline{A}(\alpha)^0 = \underline{A}^0 = \mathbb{I}$.  Now suppose $\underline{A}(\alpha)^k\underline{B} = \underline{A}^k\underline{B}$ for $0\leq k \leq k'-1$.  But using Lemma \ref{lemma:ar} implies that
\begin{multline}
\underline{A}(\alpha)^{k+1}\underline{B} = \underline{A}^{k+1}\underline{B} + \alpha \underline{H}\begin{bmatrix}(A+BK)^kB\\(A+BK)^kB\end{bmatrix} = \\
\underline{A}^{k+1}\underline{B} + \alpha\begin{bmatrix}0\\-LC(A+BK)^kB\end{bmatrix} = \underline{A}^{k+1}\underline{B},
\end{multline}
where we have used that $LC(A+BK)^kB = 0$ since $k<k'$.  And so the result follows by induction.
\end{proof}

Now let $k' = \min\{k\geq 0\ |\ C(A+BK)^kB \neq 0\}$, and consider the following tests
\begin{multline}
\label{eqn:acttest1}
\textstyle\alim_{N} \frac{1}{N}\sum_{n=0}^{N-1} (C\hat{x}_n-y_n)^{\vphantom{\textsf{T}}}(C\hat{x}_n-y_n)^\textsf{T} = \\
C\Sigma_\Delta C^\textsf{T}+\Sigma_Z
\end{multline}
\begin{equation}
\label{eqn:acttest2}
\textstyle\alim_{N} \frac{1}{N}\sum_{n=0}^{N-1} (C\hat{x}_n^{\vphantom{\textsf{T}}} - y_n^{\vphantom{\textsf{T}}}) e_{n-k'-1}^\textsf{T} = 0.
\end{equation}

\begin{theorem}
\label{thm:detcons}
Suppose $(A,B)$ is stabilizable, $(A,C)$ is detectable, $\Sigma_E$ is full rank, and $k' = \min\{k\geq 0\ |\ C(A+BK)^kB \neq 0\}$ exists.  If the test (\ref{eqn:acttest1})--(\ref{eqn:acttest2}) holds, then 
\begin{equation}
\textstyle\alim_N \frac{1}{N}\sum_{n=0}^{N-1} v_n^\textsf{T}v_n^{\vphantom{\textsf{T}}} = 0,
\end{equation}
meaning that $v_n$ asymptotically has zero power.
\end{theorem}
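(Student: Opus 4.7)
The plan is to use test~(\ref{eqn:acttest2}) to force $\alpha=0$, and then use test~(\ref{eqn:acttest1}) together with a tail-measurability argument to force the asymptotic power of $v_n$ to vanish. For the first step, I would isolate the $e_{n-k'-1}$-linear contribution to $C\hat{x}_n - y_n$. Since $\tilde{x}_n$ evolves under $\underline{A}(\alpha)$, that contribution is $\underline{A}(\alpha)^{k'}\underline{B}\,e_{n-k'-1}$, which by Proposition~\ref{prop:aalph} equals $\underline{A}^{k'}\underline{B}\,e_{n-k'-1}$, and by Lemma~\ref{lemma:ar} feeds $C(A+BK)^{k'}B\,e_{n-k'-1}$ identically into both $C\hat{x}_n$ and $Cx_n$; the two cancel in their difference. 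The only surviving $e_{n-k'-1}$-dependence arises from the $\alpha Cx_n$ piece of $v_n$ and equals $-\alpha C(A+BK)^{k'}B\,e_{n-k'-1}$, while every other contribution to $C\hat{x}_n - y_n$ is uncorrelated with $e_{n-k'-1}$. Hence test~(\ref{eqn:acttest2}) reduces to $\alpha C(A+BK)^{k'}B\,\Sigma_E = 0$, and full-rank $\Sigma_E$ together with $C(A+BK)^{k'}B \neq 0$ forces $\alpha=0$.

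With $\alpha=0$ the attack simplifies to $v_n = C\xi_n + \zeta_n$ and depends only on the attacker noises $(\omega,\zeta)$, which are independent of $(w,z)$. I would split the observer error as $\delta_n = \delta_n^{(wz)} + \delta_n^{(\xi)} + \delta_n^{(\zeta)}$, with each summand obeying the $(A+LC)$-recursion driven respectively by $-(w_n + Lz_n)$, $-LC\xi_n$, and $-L\zeta_n$. The innovations then split accordingly into three mutually independent zero-mean processes,
\[
C\hat{x}_n - y_n \;=\; \bigl(C\delta_n^{(wz)} - z_n\bigr) \;+\; \bigl(C\delta_n^{(\xi)} - C\xi_n\bigr) \;+\; \bigl(C\delta_n^{(\zeta)} - \zeta_n\bigr),
\]
whose asymptotic covariances add. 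The first summand contributes exactly $C\Sigma_\Delta C^\textsf{T} + \Sigma_Z$, so test~(\ref{eqn:acttest1}) forces the remaining two summands to vanish, yielding $\Sigma_S = 0$ (hence $\zeta_n \equiv 0$ a.s.) and $C\Sigma_\Phi C^\textsf{T} = 0$, where $\phi_n := \delta_n^{(\xi)} - \xi_n$.

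The main obstacle is to pass from $C\Sigma_\Phi C^\textsf{T} = 0$ to $\alim \frac{1}{N}\sum_n v_n^\textsf{T} v_n = 0$ without assuming $A$ is Schur stable. Ergodicity of the stationary Gaussian process lifts the covariance identity to $C\phi_n = 0$ almost surely, i.e., $C\delta_n^{(\xi)} = C\xi_n$. Substituting $LC\xi_n = LC\delta_n^{(\xi)}$ into $\delta_{n+1}^{(\xi)} = (A+LC)\delta_n^{(\xi)} - LC\xi_n$ collapses it to the autonomous, noise-free recursion $\delta_{n+1}^{(\xi)} = A\delta_n^{(\xi)}$. Iterating gives $\delta_n^{(\xi)} = A^k\delta_{n-k}^{(\xi)}$ for every $k\geq 0$, so $\delta_n^{(\xi)}$ is measurable with respect to $\sigma(\omega_j : j < n-k)$ for arbitrarily large $k$ and is therefore tail-measurable for the iid sequence $\{\omega_n\}$. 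By Kolmogorov's zero-one law this tail is trivial, and a mean-zero Gaussian random variable that is a.s.\ constant must vanish almost surely; hence $C\xi_n = C\delta_n^{(\xi)} = 0$ a.s., so $v_n = 0$ a.s., giving the result.
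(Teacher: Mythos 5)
Your proposal is correct, and its first half (forcing $\alpha=0$ from test~(\ref{eqn:acttest2}) by isolating the $e_{n-k'-1}$-linear coefficient $-\alpha\,C(A+BK)^{k'}B$ via Proposition~\ref{prop:aalph} and Lemma~\ref{lemma:ar}) is essentially identical to the paper's computation in (\ref{eqn:exptest1})--(\ref{eqn:quan1}). The second half takes a genuinely different route. The paper expands $\frac{1}{N}\sum(C\hat{x}_n-y_n)(C\hat{x}_n-y_n)^\textsf{T}$ directly and asserts in (\ref{eqn:longexp}) that its expectation is $C\Sigma_\Delta C^\textsf{T}+\Sigma_Z+\Sigma_S+\frac{1}{N}\sum\mathbb{E}(C\xi_n^{\vphantom{\textsf{T}}}\xi_n^\textsf{T}C^\textsf{T})$ plus a decaying initial-condition term, so that test~(\ref{eqn:acttest1}) immediately yields $\Sigma_S+\alim_N\frac{1}{N}\sum\mathbb{E}(C\xi_n^{\vphantom{\textsf{T}}}\xi_n^\textsf{T}C^\textsf{T})=0$ and the conclusion follows in two lines via (\ref{eqn:expcovv}). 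That expansion treats $C\delta_n$ as having asymptotic covariance $C\Sigma_\Delta C^\textsf{T}$ and as uncorrelated with $C\xi_n$, even though $\delta_n$ is itself driven by $-Lv_k$ for $k<n$ and is therefore correlated with $\xi_n$; your three-way splitting $\delta_n=\delta_n^{(wz)}+\delta_n^{(\xi)}+\delta_n^{(\zeta)}$ is the more careful bookkeeping and accounts for exactly these cross terms. The price is that test~(\ref{eqn:acttest1}) then only gives you $\Sigma_S=0$ and $C\Sigma_\Phi C^\textsf{T}=0$ with $\phi_n=\delta_n^{(\xi)}-\xi_n$, rather than the power of $C\xi_n$ itself, which is why you need the extra closing step: collapsing the recursion to $\delta^{(\xi)}_{n+1}=A\delta^{(\xi)}_n$ and invoking tail-triviality of the i.i.d.\ $\{\omega_n\}$ to conclude $C\xi_n=0$. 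That step is sound and notably requires no stability assumption on $A$, but as written it is a sketch: you should state that you are working on the two-sided stationary extension (the right object for the $\alim$ statements, since the one-sided process differs only by transients decaying under the Schur-stable $A+LC$ and $A+BK$), and that the a.s.\ identity $C\phi_n=0$ holds simultaneously for all $n$ before substituting it into the recursion. In short, your route is longer but closes the correlation issue that the paper's expansion glosses over, at the cost of an additional probabilistic argument the paper never needs to make.
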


\begin{proof}
Observe that the dynamics for $\tilde{x}$ are given by $\tilde{x}_{n+1} = \underline{A}(\alpha)\cdot\tilde{x}_n + \underline{B}e_n + \underline{D}w_n + \underline{L}((1+\alpha)z_n+C\xi_n+\zeta_n)$,
where $\underline{A}(\alpha) = \underline{A}+\alpha\underline{H}$ with $\underline{H}$ given in (\ref{eqn:math}).  Next note that a basic calculation gives
\begin{multline}
\textstyle\tilde{x}_{n} = \underline{A}(\alpha)^k\tilde{x}_{n-k} + \sum_{k'=0}^{k-1}\underline{A}(\alpha)^{k-k'-1}\big(\underline{B}e_{n+k'-k} + \\
\underline{D}w_{n+k'-k} + (1+\alpha)\cdot\underline{L}z_{n+k'-k} + \\
\textstyle\underline{L}C\xi_{n+k'-k} + \underline{L}\zeta_{n+k'-k}\big).
\end{multline}
If we define $\doubleunderline{C} = \begin{bmatrix} -C & C\end{bmatrix}$, then $C\hat{x}_n - y_n = \doubleunderline{C}\tilde{x}_n - \alpha\cdot\underline{C}\tilde{x}_n - (1+\alpha)\cdot z_n -C\xi_n-\zeta_n$, and so for $k\in[p]$ we have
\begin{multline}
\label{eqn:exptest1}
\textstyle\frac{1}{N}\sum_{n=0}^{N-1} \mathbb{E}\big((C\hat{x}_n^{\vphantom{\textsf{T}}} - y_n^{\vphantom{\textsf{T}}})e_{n-k}^\textsf{T}\big) =\\ (\doubleunderline{C}-\alpha\cdot\underline{C})\cdot\underline{A}(\alpha)^{k-1}\underline{B}\Sigma_E.
\end{multline}
Note that $k' \leq p-1$ by the Cayley-Hamilton theorem.  So combining Proposition \ref{prop:aalph} with (\ref{eqn:exptest1}) implies 
\begin{equation}
\label{eqn:quan1}
\begin{aligned}
\textstyle\frac{1}{N}\sum_{n=0}^{N-1} \mathbb{E}\big((C\hat{x}_n^{\vphantom{\textsf{T}}} - y_n^{\vphantom{\textsf{T}}}) e_{n-k'-1}^\textsf{T}\big) &= (\doubleunderline{C}-\alpha\cdot\underline{C})\cdot\underline{A}^{k'}\underline{B}\Sigma_E \\
&=-\alpha\cdot\underline{C}\cdot\underline{A}^{k'}\underline{B}\Sigma_E
\end{aligned}
\end{equation}
where the second equality holds by by Lemma \ref{lemma:ar} and the definition of $\doubleunderline{C}$.  Because the test (\ref{eqn:acttest2}) holds, the quantity (\ref{eqn:quan1}) should equal $0$.  But since $\Sigma_E$ is full rank by assumption, Sylvester's rank inequality implies $\underline{C}\cdot\underline{A}^{k'}\underline{B}\Sigma_E \neq 0$ since
\begin{equation}
\underline{C}\cdot\underline{A}^{k'}\underline{B} = \begin{bmatrix}0\\C(A+BK)^{k'}B\end{bmatrix} \neq 0,
\end{equation}
where the first equality holds by Lemma \ref{lemma:ar} and the definition of $\underline{C}$.  Thus we must have $\alpha = 0$.

Next consider the expression 
\begin{multline}
\textstyle\frac{1}{N}\sum_{n=0}^{N-1}(C\hat{x}_n-y_n)^{\vphantom{\textsf{T}}}(C\hat{x}_n-y_n)^\textsf{T} = \\
\textstyle\frac{1}{N}\sum_{n=0}^{N-1}(C\hat{x}_n-(1+\alpha)\cdot(Cx_n+z_n)-C\xi_n - \zeta_n)^{\vphantom{\textsf{T}}}\times\\
\textstyle(C\hat{x}_n-(1+\alpha)\cdot(Cx_n+z_n)-C\xi_n - \zeta_n)^\textsf{T}.
\end{multline}
We showed above that $\alpha = 0$, and so the expectation of the above expression is
\begin{multline}
\label{eqn:longexp}
\textstyle C\Sigma_\Delta C^\textsf{T} + \Sigma_Z + \Sigma_S + \frac{1}{N}\sum_{n=0}^{N-1}\mathbb{E}\big(C\xi_n^{\vphantom{\textsf{T}}}\xi_n^\textsf{T} C^\textsf{T}\big) + \\
\textstyle\frac{1}{N}\sum_{n=0}^{N-1}C(A+BK)^{N-1}x_0(C(A+BK)^{N-1}\xi_0)^\textsf{T}.
\end{multline}
Since $(A+BK)$ is Schur stable, the associated property of exponential stability implies
\begin{equation}
\label{eqn:limcross}
\textstyle\lim_N \frac{1}{N}\sum_{n=0}^{N-1}C(A+BK)^{N-1}x_0(C(A+BK)^{N-1}\xi_0)^\textsf{T} = 0
\end{equation}
by combining the Cauchy-Schwartz inequality with the exponential stability.  However from the test (\ref{eqn:acttest1}), the expectation must equal $C\Sigma_\Delta C^\textsf{T} + \Sigma_Z$ in the limit.  Since all the terms in the above expectation (\ref{eqn:longexp}) are positive semidefinite or have zero limit, this implies 
\begin{equation}
\label{eqn:exptestv}
\textstyle \Sigma_S + \alim_N \frac{1}{N}\sum_{n=0}^{N-1}\mathbb{E}\big(C\xi_n^{\vphantom{\textsf{T}}}\xi_n^\textsf{T} C^\textsf{T}\big) = 0.
\end{equation}
Finally, consider the expression
\begin{multline}
\label{eqn:covv}
\textstyle\frac{1}{N}\sum_{n=0}^{N-1} v_n^{\vphantom{\textsf{T}}}v_n^\textsf{T} = \frac{1}{N}\sum_{n=0}^{N-1} \big((\alpha(Cx_n + z_n) + C\xi_n + \zeta_n\big)\times\\
\big(\alpha(Cx_n + z_n) + C\xi_n + \zeta_n)^\textsf{T}\big).
\end{multline}
Since $\alpha =0$, the expectation of the above expression is
\begin{multline}
\label{eqn:expcovv}
\textstyle\Sigma_S + \frac{1}{N}\sum_{n=0}^{N-1}\mathbb{E}\big(C\xi_n^{\vphantom{\textsf{T}}}\xi_n^\textsf{T} C^\textsf{T}\big) + \\
\textstyle\frac{1}{N}\sum_{n=0}^{N-1}C(A+BK)^{N-1}x_0(C(A+BK)^{N-1}\xi_0)^\textsf{T}.
\end{multline}
Combining (\ref{eqn:limcross})--(\ref{eqn:expcovv}) implies $\alim_N \frac{1}{N}\sum_{n=0}^{N-1} v_n^{\vphantom{\textsf{T}}}v_n^\textsf{T} = 0$.  However, $v_n^\textsf{T}v_n^{\vphantom{\textsf{T}}}$ equals the sum of the diagonal entries of $v_n^{\vphantom{\textsf{T}}}v_n^\textsf{T}$.  Thus we have $\alim_N\frac{1}{N}\sum_{n=0}^{N-1} v_n^\textsf{T}v_n^{\vphantom{\textsf{T}}} = 0$.
\end{proof}

Existence of $k' = \min\{k\geq 0\ |\ C(A+BK)^kB\neq 0\}$ is easy to verify because Cayley-Hamilton implies $k' \leq p-1$ or it does not exist, but we also give sufficient conditions.

\begin{corollary}
Suppose $(A,B)$ is controllable, $(A,C)$ is observable, and $\Sigma_E$ is full rank.  If the test (\ref{eqn:acttest1})--(\ref{eqn:acttest2})  holds, then 
\begin{equation}
\textstyle\alim_N \frac{1}{N}\sum_{n=0}^{N-1} v_n^\textsf{T}v_n^{\vphantom{\textsf{T}}} = 0,
\end{equation}
meaning that $v_n$ asymptotically has zero power.
\end{corollary}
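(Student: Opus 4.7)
The plan is to reduce the corollary to Theorem~\ref{thm:detcons}. Theorem~\ref{thm:detcons} requires four hypotheses: $(A,B)$ stabilizable, $(A,C)$ detectable, $\Sigma_E$ full rank, and existence of $k' = \min\{k\geq 0\ |\ C(A+BK)^kB\neq 0\}$. Under the hypotheses of the corollary, the first hypothesis follows since controllability implies stabilizability, the second since observability implies detectability, and the third is assumed. So the only remaining work is to verify that $k'$ exists under controllability and observability; the conclusion then follows by directly invoking Theorem~\ref{thm:detcons}.

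To establish existence of $k'$, I would argue by contradiction. Suppose $C(A+BK)^kB = 0$ for every $k \geq 0$, and let $\mathcal{V}$ denote the reachable subspace of the closed-loop pair $(A+BK, B)$, i.e. $\mathcal{V} = \sum_{k\geq 0}(A+BK)^k \operatorname{Im}(B)$. By construction $\mathcal{V}$ is $(A+BK)$-invariant and contains $\operatorname{Im}(B)$, and the standing assumption forces $C\mathcal{V} = 0$.

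The key step is to promote $\mathcal{V}$ from $(A+BK)$-invariance to $A$-invariance. For any $v \in \mathcal{V}$, one has $BKv \in \operatorname{Im}(B) \subseteq \mathcal{V}$, and therefore $Av = (A+BK)v - BKv \in \mathcal{V}$. Hence $\mathcal{V}$ is an $A$-invariant subspace containing $\operatorname{Im}(B)$. Since $(A,B)$ is controllable, the smallest such subspace is all of $\mathbb{R}^p$, so $\mathcal{V} = \mathbb{R}^p$. Combined with $C\mathcal{V} = 0$, this gives $C = 0$, contradicting observability of $(A,C)$.

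I anticipate the only real obstacle is this promotion step, since it is where controllability (rather than the weaker stabilizability hypothesis of Theorem~\ref{thm:detcons}) is genuinely used: without controllability, the inclusion $\operatorname{Im}(B) \subseteq \mathcal{V}$ together with $A$-invariance need not force $\mathcal{V}=\mathbb{R}^p$. Once $k'$ is shown to exist, the remainder of the argument is a one-line appeal to Theorem~\ref{thm:detcons}, so no further computation is needed.
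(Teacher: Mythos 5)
Your proposal is correct, and it follows the same overall strategy as the paper: verify the hypotheses of Theorem~\ref{thm:detcons} (controllability $\Rightarrow$ stabilizability, observability $\Rightarrow$ detectability, $\Sigma_E$ full rank by assumption) and then show that the only nontrivial hypothesis, the existence of $k'$, follows from controllability and observability. Where you differ is in how that existence is established. The paper argues via ranks: feedback preserves controllability, so the controllability matrix $\mathfrak{C}$ of $(A+BK,B)$ has rank $p$; Sylvester's rank inequality gives $\rank(C\mathfrak{C})\geq\rank(C)$; observability forces $\rank(C)\geq 1$; hence $C\mathfrak{C}\neq 0$ and some block $C(A+BK)^kB$ is nonzero. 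You instead run a geometric contradiction argument on the reachable subspace $\mathcal{V}$ of $(A+BK,B)$, with the promotion of $\mathcal{V}$ from $(A+BK)$-invariance to $A$-invariance doing the work that the paper delegates to the standing fact that state feedback preserves controllability --- your step $Av=(A+BK)v-BKv\in\mathcal{V}$ is in effect a self-contained proof of that fact. Your route also extracts only what is actually needed from observability, namely $C\neq 0$, whereas the paper routes this through a second application of a rank inequality to the observability matrix. Both arguments are sound; yours is slightly more self-contained and makes explicit where controllability (as opposed to mere stabilizability) enters, while the paper's rank computation additionally exhibits the bound $k'\leq p-1$ directly from the width of $\mathfrak{C}$ (though this also follows from your argument via Cayley--Hamilton, and is anyway noted separately in the paper).
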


\begin{proof}
We claim that, under the conditions stated, $k' = \min\{k\geq 0\ |\ C(A+BK)^kB \neq 0\} \leq p-1$ exists.  Indeed, since $(A,B)$ is controllable we have that: $(A+BK,B)$ is controllable, and the controllability matrix
\begin{equation}
\mathfrak{C} = \begin{bmatrix} B & (A+BK)B & \ldots & (A+BK)^{p-1}B\end{bmatrix}
\end{equation}
has $\rank(\mathfrak{C}) = p$.  And so by Sylvester's rank inequality, we have $\rank(C\mathfrak{C}) \geq \rank(C) + \rank(\mathfrak{C}) - p = \rank(C)$. But $(A,C)$ is observable, and so the observability matrix
\begin{equation}
\mathfrak{O} = \begin{bmatrix} C \\ CA \\ \vdots \\ CA^{p-1}\end{bmatrix} = \diag(C,\ldots,C)\begin{bmatrix} \mathbb{I} \\ A \\ \vdots \\ A^{p-1}\end{bmatrix}
\end{equation}
has $\rank(\mathfrak{O}) = p$.  Again applying Sylvester's rank inequality implies $p\rank(C) \geq \rank(\mathfrak{O}) = p$, or equivalently that $\rank(C) \geq 1$.  Combining this with the earlier inequality gives $\rank(C\mathfrak{C}) \geq 1$, and so $C\mathfrak{C} \neq 0$.  This means $k' \leq p-1$ exists since $C\mathfrak{C}$ is a block matrix consisting of the blocks $C(A+BK)^kB$.  Thus the result follows by Theorem \ref{thm:detcons}.
\end{proof}

\section{Statistical Version of Test}

\label{sec:svt}

For the purpose of implementation, we can also construct a statistical version of our test (\ref{eqn:acttest1})--(\ref{eqn:acttest2}).  Our approach is similar to \cite{satchidanandan2016dynamic} in that we construct a hypothesis test by thresholding the negative log-likelihood.  Before defining the test, we make the following useful observation:
\begin{proposition}
Let $\psi^\textsf{T}_n = \begin{bmatrix} (C\hat{x}_n^{\vphantom{\textsf{T}}} - y_n^{\vphantom{\textsf{T}}})^\textsf{T} & e_{n-k'-1}^\textsf{T}\end{bmatrix}$.  The test (\ref{eqn:acttest1})--(\ref{eqn:acttest2}) holds if and only if the following test holds:
\begin{equation}
\label{eqn:acttestjoint}
\textstyle\alim_{N} \frac{1}{N}\sum_{n=0}^{N-1} \psi_n^{\vphantom{\textsf{T}}}\psi_n^\textsf{T} = \begin{bmatrix}C\Sigma_\Delta C^\textsf{T}+\Sigma_Z & 0\\ 0 & \Sigma_E\end{bmatrix}.
\end{equation}
Moreover, if the test (\ref{eqn:acttest1})--(\ref{eqn:acttest2}) holds or equivalently the test (\ref{eqn:acttestjoint}) holds, then we have that $\alim_n \mathbb{E}(\psi_n) = 0$.
\end{proposition}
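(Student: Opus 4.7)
The plan is to handle the biconditional and the asymptotic mean claim separately. For the equivalence, I would expand the outer product in block form
$$\psi_n\psi_n^\textsf{T} = \begin{bmatrix} (C\hat{x}_n - y_n)(C\hat{x}_n - y_n)^\textsf{T} & (C\hat{x}_n - y_n)e_{n-k'-1}^\textsf{T} \\ e_{n-k'-1}(C\hat{x}_n - y_n)^\textsf{T} & e_{n-k'-1}e_{n-k'-1}^\textsf{T}\end{bmatrix}$$
and read off each block's asymptotic time-average against the right-hand side of (\ref{eqn:acttestjoint}). The upper-left block matches $C\Sigma_\Delta C^\textsf{T}+\Sigma_Z$ exactly when (\ref{eqn:acttest1}) holds, the upper-right block (and its transpose as lower-left) vanishes exactly when (\ref{eqn:acttest2}) holds, and the lower-right block's asymptotic average is $\Sigma_E$ automatically by the strong law of large numbers applied to the i.i.d.\ Gaussian excitation. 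Stacking these three observations yields the biconditional.

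For the asymptotic mean claim, $\mathbb{E}(e_{n-k'-1}) = 0$ is immediate since $e_n$ is zero mean, so only $\mathbb{E}(C\hat{x}_n - y_n)$ requires work. Under the tests, the argument from the proof of Theorem \ref{thm:detcons} already forces $\alpha = 0$, which collapses the closed-loop dynamics to
$$\tilde{x}_{n+1} = \underline{A}\tilde{x}_n + \underline{B}e_n + \underline{D}w_n + \underline{L}(z_n + C\xi_n + \zeta_n),$$
with $\xi_n$ driven autonomously by $\xi_{n+1} = (A+BK)\xi_n + \omega_n$. Taking expectations and using that $e_n$, $w_n$, $z_n$, $\zeta_n$, and $\omega_n$ are all zero mean produces the pair of linear recursions $\mathbb{E}(\xi_{n+1}) = (A+BK)\mathbb{E}(\xi_n)$ and $\mathbb{E}(\tilde{x}_{n+1}) = \underline{A}\mathbb{E}(\tilde{x}_n) + \underline{L}C\mathbb{E}(\xi_n)$. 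Schur stability of $A+BK$ drives $\mathbb{E}(\xi_n)\to 0$ exponentially, and Schur stability of $\underline{A}$ (which holds because it is similar to $\doubleunderline{A}$ via the change of variables used in Sect.\ \ref{sec:ltisam}) then gives $\mathbb{E}(\tilde{x}_n)\to 0$ as a stable recursion driven by an exponentially vanishing forcing term. Writing $C\hat{x}_n - y_n = \doubleunderline{C}\tilde{x}_n - z_n - C\xi_n - \zeta_n$ at $\alpha = 0$ and taking expectations then forces $\mathbb{E}(C\hat{x}_n - y_n)\to 0$ by linearity.

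The block-matrix step and the SLLN identification are routine bookkeeping. The only piece with any substance is the mean-dynamics argument, whose minor obstacle is recognizing that the attacker's fictitious state $\xi_n$ must be carried through as an exponentially decaying forcing term on the $\tilde{x}$ recursion rather than absorbed into the zero-mean noise; once that separation is made, Schur stability of $A+BK$ and $\underline{A}$ handles everything and the conclusion $\alim_n\mathbb{E}(\psi_n)=0$ falls out.
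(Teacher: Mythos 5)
Your proposal is correct and follows essentially the same route as the paper: the block decomposition of $\psi_n^{\vphantom{\textsf{T}}}\psi_n^\textsf{T}$ for the equivalence (which the paper dispatches as immediate from the definitions), then invoking $\alpha=0$ from the proof of Theorem \ref{thm:detcons} and Schur stability to drive the mean of the innovation to zero. The only differences are cosmetic: you work in the $\tilde{x}$ coordinates rather than the paper's $\check{x}$ coordinates, and you close the limit by viewing the mean recursion as a stable system with exponentially decaying forcing, whereas the paper passes to the fixed-point equation $\lim_n\mathbb{E}(\check{x}_n)=\doubleunderline{A}\lim_n\mathbb{E}(\check{x}_n)$ and uses invertibility of $\mathbb{I}-\doubleunderline{A}$; your version is, if anything, slightly more careful about existence of the limit and about carrying the $-z_n-C\xi_n-\zeta_n$ terms explicitly.
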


\begin{proof}
The equivalence between (\ref{eqn:acttest1})--(\ref{eqn:acttest2}) and (\ref{eqn:acttestjoint}) follows from the definition of $\psi_n$ and of the tests.  Next suppose either (equivalent) test holds: Using the dynamics on $\check{x}$ we have $\mathbb{E}(\check{x}_{n+1}) = \doubleunderline{A}\mathbb{E}(\check{x}_n) + \doubleunderline{L}\mathbb{E}(v_n)$.  But we have $v_n = C(A+BK)^n\xi_0 + C\sum_{k=0}^{n-1}(A+BK)^{n-k-1}\omega_k + \zeta_n$ since $\alpha = 0$ as shown in the proof of Theorem \ref{thm:detcons}, and so $\mathbb{E}(v_n) = C(A+BK)^n\xi_0$.  Since $(A+BK)$ is Schur stable, we have $\lim_n\mathbb{E}(v_n) = 0$ and hence $\lim_n\mathbb{E}(\check{x}_n) = \doubleunderline{A}\lim_n\mathbb{E}(\check{x}_n)$.  This means that $\lim_n\mathbb{E}(\check{x}_n) = 0$ since $\mathbb{I}-\doubleunderline{A}$ is full rank (which can be seen by recalling that $\doubleunderline{A}$ is Schur stable, so cannot have any eigenvalue of exactly one, and thus $\det(s\mathbb{I}-A) \neq 0$ for $s = 1$).  Since $C\hat{x}_n-y_n = \begin{bmatrix}0 & C\end{bmatrix}\check{x}_n$, we have that $\mathbb{E}(C\hat{x}_n-y_n) = 0$.  This implies $\mathbb{E}(\psi_n) = 0$ since $\mathbb{E}(e_{n-k'-1})=0$ by construction.
\end{proof}

This result implies that asymptotically the summation $S_n = \frac{1}{\ell}\sum_{n+1}^{n+\ell}\psi_n^{\vphantom{\textsf{T}}}\psi_n^\textsf{T}$ with $\ell \geq m+q$ has a Wishart distribution with $\ell$ degrees of freedom and a scale matrix that matches (\ref{eqn:acttestjoint}), and we use this observation to define a statistical test.  In particular, we check if the negative log-likelihood 
\begin{multline}
\label{eqn:nlltest}
\mathcal{L}(S_n) = (m+q+1-\ell)\cdot\log\det S_n + \\
\trace\left(\begin{bmatrix}(C\Sigma_\Delta C^\textsf{T}+\Sigma_Z)^{-1} &0\\ 0 & \Sigma_E^{-1}\end{bmatrix}\times S_n\right)
\end{multline}
corresponding to this Wishart distribution and the summation $S_n$ is large by conducting the hypothesis test
\begin{equation}
\begin{cases}
\text{reject, } &\text{if } \mathcal{L}(S_n) > \tau(\alpha)\\
\text{accept, } &\text{if } \mathcal{L}(S_n) \leq \tau(\alpha)
\end{cases}
\end{equation}
where $\tau(\alpha)$ is a threshold that controls the false error rate $\alpha$.  A rejection corresponds to the detection of an attack, while an acceptance corresponds to the lack of detection of an attack.  This notation emphasizes the fact that achieving a specified false error rate $\alpha$ (a false error in our context corresponds to detecting an attack when there is no attack occurring) requires changing the threshold $\tau(\alpha)$.

\section{Relationship to Existing Tests}

\label{sec:ret}

It is interesting to compare our test (\ref{eqn:acttest1})--(\ref{eqn:acttest2}) to those designed in \cite{satchidanandan2016dynamic}.  More specifically, \cite{satchidanandan2016dynamic} designed a related sequence of tests adapted to different (and less complex) assumptions about the model dynamics.  We will show that our test is closely related to (and generalizes) these previous tests developed under assumptions of less complex dynamics.

The simplest test in \cite{satchidanandan2016dynamic} was designed for systems with direct state measurement (i.e., $C = \mathbb{I}$), no measurement error (i.e, $z_n\equiv 0$), and full rank input matrix (i.e, $\rank(B) = p$).  The SISO (i.e., $m=p=q=1$) and MIMO cases were considered separately in \cite{satchidanandan2016dynamic}, though the SISO case is a special case of the MIMO case.  If we choose $L = -A$, then we have that: $y_n = x_n + v_n$, $x_{n+1} = Ax_n + BK\hat{x}_n + Be_n + w_n$, $\hat{x}_{n+1} = Ay_n + BK\hat{x}_n + Be_n$, $\Sigma_\Delta = \Sigma_W$, and $k'=1$ since $\rank(CB) = p$.  So our test (\ref{eqn:acttest1})--(\ref{eqn:acttest2}) simplifies to
\begin{multline}
\textstyle\alim_{N} \frac{1}{N}\sum_{n=0}^{N-1} (y_{n+1} - Ay_{n} -BK\hat{x}_{n} - Be_{n})\times\\ (y_{n+1} - Ay_{n} -BK\hat{x}_{n} - Be_{n})^\textsf{T}= 
\Sigma_W
\end{multline}
\begin{multline}
\label{eqn:simptest2}
\textstyle\alim_{N} \frac{1}{N}\sum_{n=0}^{N-1} (y_{n+1} - Ay_{n} -BK\hat{x}_{n} - Be_{n})\times\\e_{n}^\textsf{T} = 0.
\end{multline}
This exactly matches the test designed in \cite{satchidanandan2016dynamic} for LTI systems with the above described properties.

A more complex test in \cite{satchidanandan2016dynamic} was designed for SISO (i.e., $m=q=1$) systems with partial state measurement.  In our notation, the tests in \cite{satchidanandan2016dynamic} for this case simplify to
\begin{multline}
\label{eqn:sd1}
\textstyle\alim_{N} \frac{1}{N}\sum_{n=0}^{N-1} L(C\hat{x}_n-y_n)^{\vphantom{\textsf{T}}}(C\hat{x}_n-y_n)^\textsf{T}L^\textsf{T} = \\
L\big(C\Sigma_\Delta C^\textsf{T}+\Sigma_Z\big)L^\textsf{T}
\end{multline}
\begin{equation}
\label{eqn:sd2}
\textstyle\alim_{N} \frac{1}{N}\sum_{n=0}^{N-1} L(C\hat{x}_n^{\vphantom{\textsf{T}}} - y_n^{\vphantom{\textsf{T}}}) e_{n-1}^\textsf{T} = 0.
\end{equation}
But $k'=1$ since $B$ is a nonzero vector and $\rank(CB) = 1$ in this case.  So the test (\ref{eqn:sd1})--(\ref{eqn:sd2}) from \cite{satchidanandan2016dynamic} essentially matches our test (\ref{eqn:acttest1})--(\ref{eqn:acttest2}), but with the difference that the test in \cite{satchidanandan2016dynamic} considers quantities with $L(C\hat{x}_n-y_n)$, while our test directly considers quantities with $C\hat{x}_n-y_n$; this is a negligible difference since $C\hat{x}_n-y_n$ is a scalar in this SISO case.

\begin{figure}
\label{fig:one}
\centering
\includegraphics[trim={0.2in 0in 0.3in 0in},clip,scale=0.95]{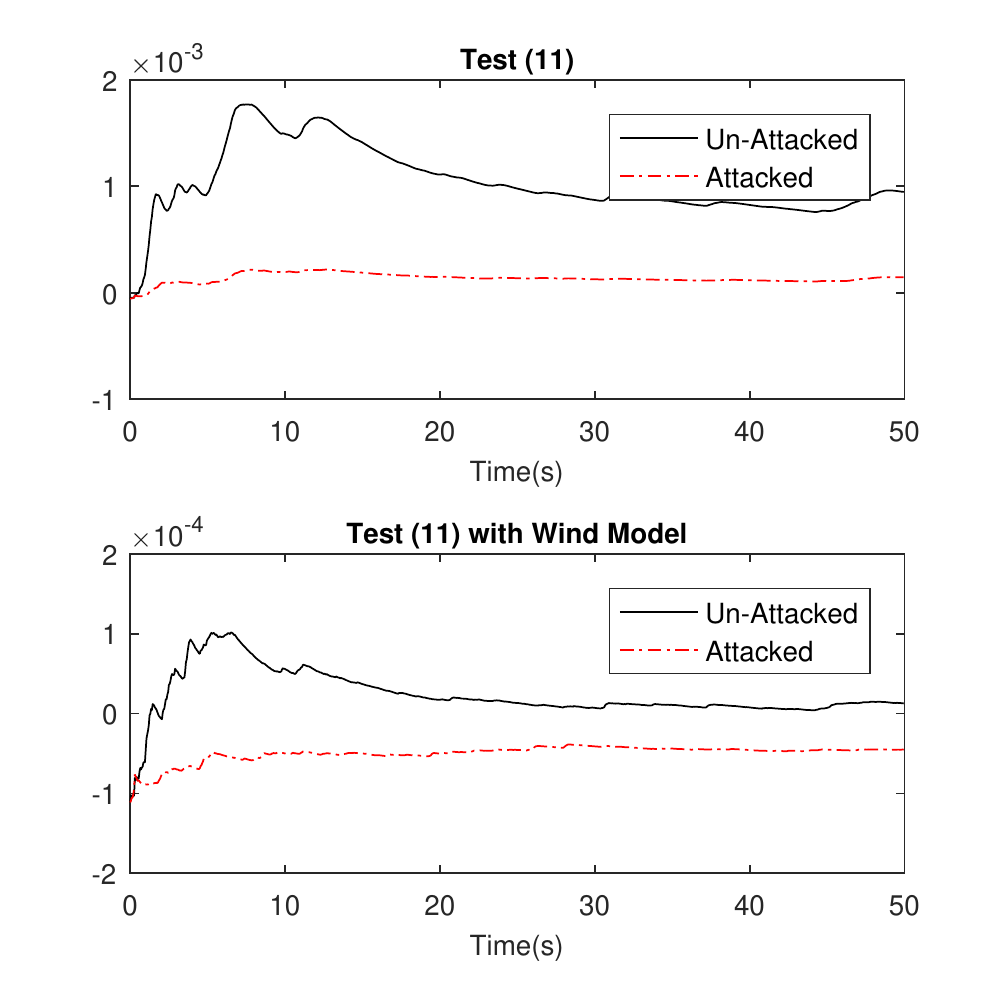}
\caption{Deviation of (\ref{eqn:acttest1}) in Simulation of Autonomous Vehicle}
\end{figure}
\begin{figure}
\label{fig:two}
\includegraphics[trim={0.2in 0in 0.3in 0in},clip,scale=0.95]{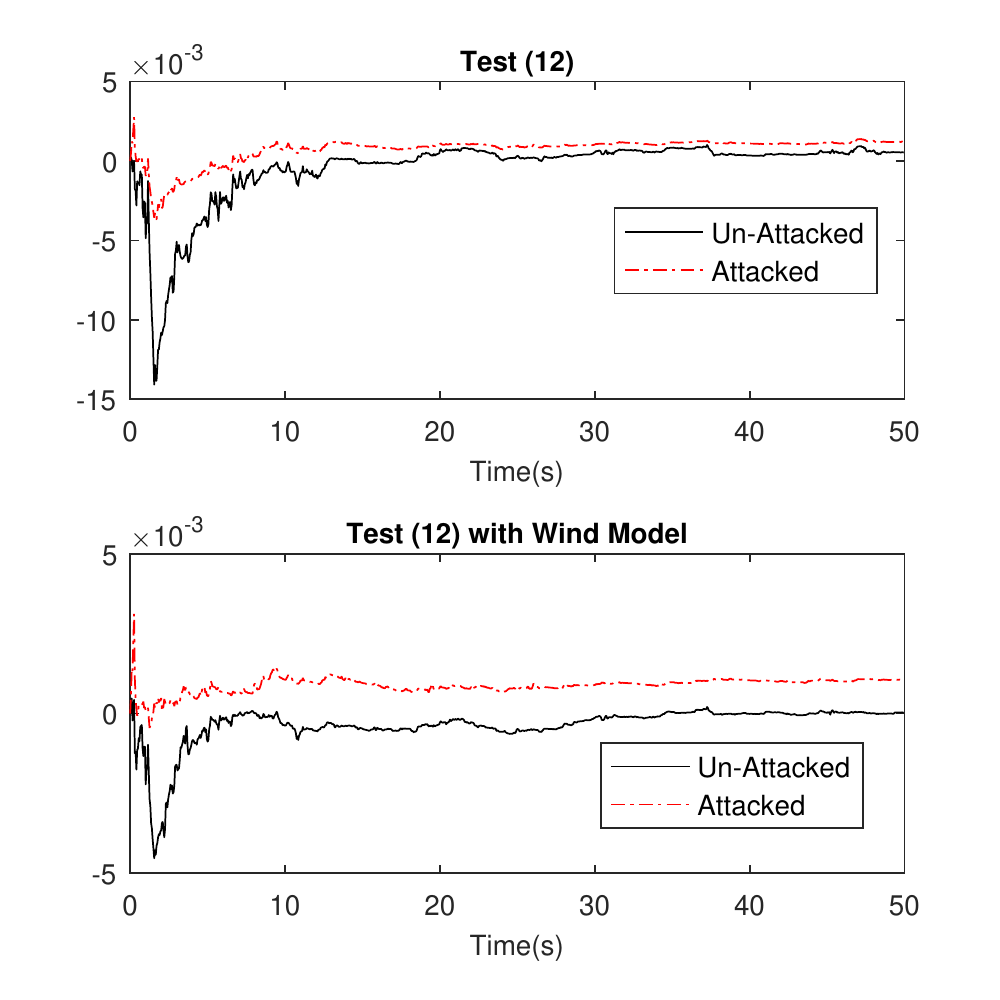}
\caption{Deviation of (\ref{eqn:acttest2}) in Simulation of Autonomous Vehicle}
\end{figure}

\section{Simulations: Autonomous Vehicle}

\label{sec:sav}

A standard model \cite{turri2013linear} for error kinematics of lane keeping and speed control has $x^\textsf{T} = \begin{bmatrix} \psi\ y \  s\ \gamma\ v\end{bmatrix}$ and $u^\textsf{T} = \begin{bmatrix}r\ a\end{bmatrix}$, where $\psi$ is heading error, $y$ is lateral error, $s$ is trajectory distance, $\gamma$ is vehicle angle, $v$ is vehicle velocity, $r$ is steering, and $a$ is acceleration. Linearizing about a straight trajectory and constant velocity $v_0 = 10$, and then performing exact discretization with sampling period $t_s = 0.05$ yields
\begin{equation}
A = \begin{bmatrix} 1 & 0 & 0 & \frac{1}{10} & 0\\
\frac{1}{2} & 1 & 0 & \frac{1}{40} & 0\\
0 & 0 & 1 & 0 & \frac{1}{2}\\
0 & 0 & 0 & 1 & 0\\
0 & 0 & 0 & 0 & 1 \end{bmatrix}\qquad B = \begin{bmatrix} \frac{1}{400} & 0\\
\frac{1}{2400} & 0\\
0 & \frac{1}{800}\\
\frac{1}{20} & 0\\
0 & \frac{1}{20} \end{bmatrix}
\end{equation}
with $C = \begin{bmatrix} I & 0\end{bmatrix} \in\mathbb{R}^{3\times5}$.  We used process and measurement noise with $\Sigma_W = 10^{-8}$ and $\Sigma_Z = 10^{-5}$, respectively.  Our simulations used the wind model: $d_{n+1} = 0.9d_n + \chi_n$, where $\chi_n$ are i.i.d. zero mean Gaussians with $\sigma^2_\chi = 2\times 10^{-6}$, and the wind state $d$ entered additively into the $y$ dynamics.

We applied our tests using a dynamic watermark with variance $\Sigma_E = \frac{1}{2}\mathbb{I}$, where $K$ and $L$ were chosen to stabilize the closed-loop system without an attack.  We conducted four simulations: Un-attacked and attacked simulations were conducted with a test computed without wind in the system model, and un-attacked and attacked simulations were conducted with a test computed with wind in the system model.  In both attack simulations, we chose an attacker with $\alpha = -0.6$, $\xi_0 = 0$, $\Sigma_O =10^{-8}$, and $\Sigma_S = 10^{-8}$.  Fig. \ref{fig:one} shows $\|\frac{1}{N}\sum_{n=0}^{N-1} (C\hat{x}_n-y_n)^{\vphantom{\textsf{T}}}(C\hat{x}_n-y_n)^\textsf{T} -C\Sigma_\Delta C^\textsf{T}-\Sigma_Z\|$, and Fig. \ref{fig:two} shows $\|\frac{1}{N}\sum_{n=0}^{N-1} (C\hat{x}_n^{\vphantom{\textsf{T}}} - y_n^{\vphantom{\textsf{T}}}) e_{n-k'-1}^\textsf{T}\|$.  If the test is detection consistent, then these values go to zero.  The plots show dynamic watermarking cannot detect the presence or absence of an attack when wind affects the system dynamics but is not included in the test, while our test sdetect the presence or absence of an attack when a model of wind is included in the test.  Fig. \ref{fig:three} shows the results of applying our statistical test (\ref{eqn:nlltest}), and the same behavior is seen.

\begin{figure*}
\label{fig:three}
\includegraphics[trim={0.5in 0in 0.7in 0in},clip]{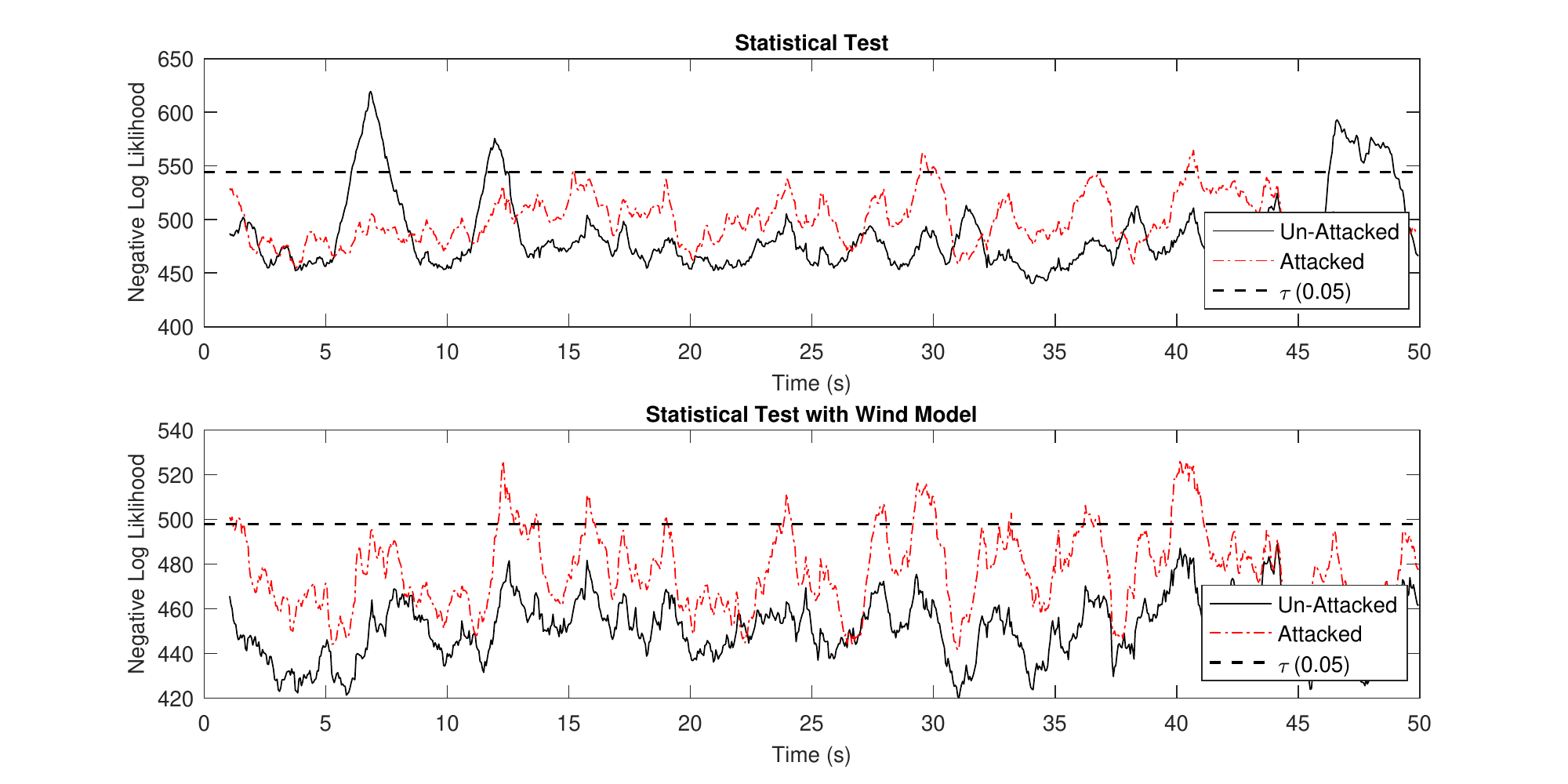}
\caption{Value of (\ref{eqn:nlltest}) for Simulation of Autonomous Vehicle, with a Negative Log-Likelihood Threshold for $\alpha=0.05$ False Detection Error Rate}
\end{figure*}

\section{Conclusion}

This paper constructed a dynamic watermarking approach for detecting malicious sensor attacks for general LTI systems, and the two main contributions were: to extend dynamic watermarking to general LTI systems under a specific attack model that is more general than replay attacks, and to show that modeling is important for designing watermarking techniques by demonstrating how persistent disturbances can negatively affect the accuracy of dynamic watermarking.  Our approach to resolve this issue was to incorporate a model of the persistent disturbance via the internal model principle.  Future work includes generalizing the attack models that can be detected by our approach.  An additional direction for future work is to study the problem of robust controller design in the regime of when an attack is detected.

\bibliographystyle{IEEEtran}
\bibliography{IEEEabrv,secure}

\end{document}